\newtheorem{theorem}{Theorem}[section]
\newtheorem{lemma}[theorem]{Lemma}
\theoremstyle{definition}
\theoremstyle{remark}
\newtheorem{remark}{Remark}
\numberwithin{equation}{section}
\begin{document}

\title{Remarks on the extension of the Ricci flow}

\author{Fei He}
\address{Department of Mathematics, University of California, Irvine, CA 92697}

\email{hef@uci.edu}

\date{\today}



\begin{abstract}
We present two new conditions to extend the Ricci flow on a compact manifold over a finite time, which are improvements of some known extension theorems.
\end{abstract}

\thanks{This research was partially supported by NSF grant DMS-0801988}

\maketitle

\section{Introduction}
We say that $g(t)$ is a Ricci flow solution if it satisfies the following equation defined by Richard Hamilton \cite{HR1} :
\begin{equation}\label{ricci flow}
 \frac{\partial}{\partial t}g(t)= -2 Ric(t) .
\end{equation}
In the following $Rm(t)$, $Ric(t)$ and $R(t)$ denote the Riemann, Ricci and scalar curvature tensors of $g(t)$ respectively, and $|Rm(t)|$, $|Ric(t)|$ denote the corresponding norms.

The Ricci flow equation (\ref{ricci flow}) has been studied extensively. Short time existence of solutions was first established by R. Hamilton in \cite{HR1}. Convergence of solutions to Einstein metrics is proved possible for initial metrics with special curvature conditions. In general, the Ricci flow solution will develop singularities in finite or infinite time. Therefore an important topic in the theory is the behavior of curvature tensors at a singular time.


R. Hamilton showed in \cite{HR2} that if $T<\infty$ is a finite singular time, we have
\begin{equation*}
 \limsup_{t \to T} \sup_M |Rm(t)|= +\infty.
\end{equation*}
\noindent
In other words, if the sectional curvature is uniformly bounded on a finite time interval, then the flow can be extended (\cite{HR1}). The proof is by establishing Bernstein-Bando type smoothing estimates using the maximum principle. Hamilton's theorem has been improved by Natasa Sesum who showed that if
\begin{equation*}
\sup_{M \times [0,T)} | Ric(t) |<\infty,
\end{equation*}
then the Ricci flow can be extended past time $T < \infty$ (\cite{SN}). These results are known as extension theorems for the Ricci flow.

A natural question is: what is the weakest curvature condition to extend the Ricci flow? There is a conjecture that in a finite time singularity of the Ricci flow, the supremum of the scalar curvature will blow up. This conjecture is confirmed for Type I singularities (\cite{EMT}, \cite{SeLe3}) and for the K\"ahler Ricci flow (\cite{Zhang}). But the general case is still open. One of the best known results in this direction is the following Theorem \ref{Bing Wang theorem}, which is implied by the proof method of Natasa Sesum in \cite{SN}, and is also proved by Bing Wang in \cite{WB}:
\begin{theorem}\label{Bing Wang theorem}
Suppose $(M, g(t)),0\leq t < T < \infty,$ is a Ricci flow solution on a closed manifold. If
\begin{equation*}
\int_0^T \sup_M |Ric(t)| dt < +\infty .
\end{equation*}
Then the flow can be extended past time $T$.
\end{theorem}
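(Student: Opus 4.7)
The plan is to argue by contradiction, combining the metric equivalence coming from the integral hypothesis with a Perelman--Hamilton blow-up analysis, following the strategy Sesum used in the bounded-Ricci case. First, for each point $x$ and each tangent vector $v \in T_x M$,
\[
\left| \frac{d}{dt} \log g(t)(v,v) \right| \leq 2\,|Ric(t)|_{g(t)} \leq 2 \sup_M |Ric(t)|,
\]
so integrating in $t$ gives $e^{-2C} g(0) \leq g(t) \leq e^{2C} g(0)$ with $C = \int_0^T \sup_M |Ric(s)|\,ds$; in particular, volumes and diameters stay uniformly bounded above and below. Perelman's no-local-collapsing theorem then yields a uniform $\kappa > 0$ such that the flow is $\kappa$-non-collapsed on scales $\leq \sqrt{T}$. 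If the flow did not extend past $T$, Hamilton's theorem would force $\sup_{M \times [0,T)} |Rm| = +\infty$, and a standard point-picking argument would produce a sequence $(x_i, t_i)$ with $t_i \to T$, $Q_i := |Rm|(x_i, t_i) \to \infty$, and $|Rm| \leq 2 Q_i$ on a parabolic neighborhood that exhausts every fixed parabolic region after rescaling by $Q_i$.

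The crucial observation is that the integral hypothesis behaves very well under this rescaling. Set $g_i(t) = Q_i\, g(t_i + Q_i^{-1} t)$; since $|Ric|_{cg} = c^{-1} |Ric|_g$, a direct change of variable gives
\[
\int_{-A}^{0} \sup_M |Ric|_{g_i(t)}\,dt \;=\; \int_{t_i - A/Q_i}^{t_i} \sup_M |Ric|_{g(s)}\,ds \;\longrightarrow\; 0
\]
for every fixed $A > 0$, as a shrinking tail of a finite integral (absolute continuity of the integral). Combined with the $\kappa$-non-collapsing and the uniform bound $|Rm|_{g_i} \leq 2$, Hamilton--Cheeger--Gromov compactness extracts a subsequential smooth pointed limit $(M_\infty, g_\infty(t), x_\infty)$, a complete Ricci flow satisfying $|Rm_\infty| \leq 2$, $|Rm_\infty|(x_\infty, 0) = 1$, and $Ric_\infty \equiv 0$. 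Since $\partial_t g_\infty = -2\, Ric_\infty = 0$, the limit is a \emph{static} complete Ricci-flat manifold, $\kappa$-non-collapsed at all scales, yet non-flat at $x_\infty$.

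The main obstacle is closing the argument by ruling out such a limit. My plan is to transport Perelman's reduced-volume monotonicity across the blow-up: fix a basepoint $(y_0, T)$ in the original flow and consider the reduced volume $\tilde V(\tau)$ based there; use the uniform equivalence of metrics together with a definite positive lower bound on $\tilde V$ at large $\tau$ (coming from the controlled initial data and bounded diameter and volume), and then let $i \to \infty$ to obtain $\tilde V_\infty \equiv (4\pi)^{n/2}$ on the limit flow by monotonicity. The rigidity case of Perelman's inequality then identifies $(M_\infty, g_\infty)$ with flat Euclidean space, contradicting $|Rm_\infty|(x_\infty) = 1$. The delicate points will be (i) verifying that Perelman's $\kappa$-non-collapsing constant is genuinely independent of the rescaling and scale, and (ii) justifying that the reduced-volume identity passes cleanly to the Cheeger--Gromov limit, both of which are standard once the framework above is set up carefully.
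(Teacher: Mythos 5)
Your blow-up skeleton is sound and is essentially the one the paper uses (for its more general Theorem \ref{main result 1}, of which Theorem \ref{Bing Wang theorem} is derived as a corollary via dominated convergence): the metric distortion estimate from $\int_0^T\sup_M|\mathrm{Ric}|\,dt<\infty$, Perelman non-collapsing, Hamilton--Cheeger--Gromov compactness, and the observation that the scale invariance of $\int \sup_M|\mathrm{Ric}|\,dt$ together with absolute continuity of the integral forces the limit to be Ricci-flat are all correct and match the paper. The gap is in your final step, the one that actually has to rule out a complete, non-flat, Ricci-flat, $\kappa$-non-collapsed static limit. Your stated mechanism --- ``a definite positive lower bound on $\tilde V$ at large $\tau$ \ldots then let $i\to\infty$ to obtain $\tilde V_\infty\equiv(4\pi)^{n/2}$ by monotonicity'' --- is not valid: monotonicity plus a positive lower bound only yields that $\tilde V_\infty$ stays bounded below by that constant (i.e.\ non-collapsing of the limit, which you already have); it does not force $\tilde V_\infty$ to attain its maximal value. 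To invoke the rigidity case of Perelman's monotonicity you would need $\tilde V_\infty$ to be \emph{constant in $\tau$ and equal to its $\tau\to 0^+$ limit}, and for a blow-up at a genuine singularity that constant is in general strictly less than the Gaussian value. Moreover, two technical prerequisites of this route are missing: (i) defining the reduced length based at the singular time $(y_0,T)$ is exactly what requires the Type I hypothesis in Enders--M\"uller--Topping and is not available under your assumptions; (ii) the point-picking basepoints $(x_i,t_i)$ have no reason to remain at bounded parabolic distance from $(y_0,T)$ after rescaling, so the reduced volumes based at $y_0$ do not transport to a reduced volume based at $x_\infty$ on the limit flow.

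Two ways to close the argument correctly, using only what you have already established. The paper's route: use Aubin's almost-optimal Sobolev inequality at a fixed time $t_0$ close to $T$, propagate it to times $t_1\in[t_0,T)$ with distortion factors $e^{O(\epsilon)}$ controlled by $\int_{t_0}^{T}\sup_M|\mathrm{Ric}|\,dt<\epsilon$, rescale, and pass to the limit; the zeroth-order term dies because its coefficient scales like $Q_i^{-1}$, and letting $\epsilon\to 0$ gives the optimal Euclidean Sobolev inequality on $(M_\infty,g_\infty(0))$, whence Ledoux's rigidity theorem (Theorem \ref{Ledoux}) forces $M_\infty\cong\mathbb{R}^n$, contradicting $|Rm_\infty|(x_\infty,0)=1$. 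Alternatively, Sesum's original route: the same distortion estimate (with factor $e^{O(\epsilon)}$, $\epsilon\to 0$ as $t_0\to T$) shows that the volume ratios $\mathrm{vol}_{g_i(0)}B_{g_i(0)}(x_i,r)/r^n$ converge to $\omega_n$ for every fixed $r$, so the limit has $\mathrm{Ric}\geq 0$ and Euclidean volume growth, and Bishop--Gromov rigidity identifies it with $\mathbb{R}^n$. Either substitution turns your outline into a complete proof.
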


\begin{remark}
 Bing Wang also proved in \cite{WB} that there is a gap for $\limsup_{t\to T}{|T-t|\sup_M|Ric(t)|}$ where $T$ is the singular time.
\end{remark}

Theorem \ref{Bing Wang theorem} tells that $\sup_M |Ric(t)| $ not only blows up at a finite singular time, but also has to grow fast enough so that its integral on the maximal existence time interval is infinite. This clearly recovers previous results mentioned above.

Our first theorem is a further improvement in this direction. The new idea is to explore the optimal Sobolev constant and apply a related rigidity theorem.

\begin{theorem}\label{main result 1}
Suppose$(M, g(t)),0\leq t < T,$ is a Ricci flow solution on a closed manifold, $T < \infty$. If the function $F(x):= \int_0^T|Ric(x,t)|dt$ is continuous on M, then the flow can be extended past time $T$.
\end{theorem}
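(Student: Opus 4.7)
I would argue by contradiction, combining a blow-up analysis with a Sobolev rigidity, and using the continuity of $F$ to force the blow-up limit to be Ricci-flat. Suppose $T$ is singular. Since $F$ is continuous on the compact $M$, the constant $C := \sup_M F$ is finite. Integrating $\partial_t g = -2\,Ric$ pointwise gives $e^{-2C}g(0) \le g(t) \le e^{2C}g(0)$, so all metrics $g(t)$, $t \in [0,T)$, are uniformly equivalent; in particular Perelman's no local collapsing applies on $[0,T)$. By Hamilton's original extension theorem $\sup_{M\times[0,t]}|Rm|\to\infty$ as $t \to T$, so standard point-picking furnishes sequences $t_n \to T$, $x_n \in M$ with $Q_n := |Rm|(x_n,t_n) = \max_{M\times[0,t_n]}|Rm| \to \infty$. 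Parabolically rescaling $g_n(s) := Q_n\, g(t_n + s/Q_n)$ on $s \in [-Q_n t_n, 0]$, Hamilton's compactness theorem produces a subsequential pointed Cheeger-Gromov limit $(M_\infty, g_\infty(s), x_\infty)$, an ancient Ricci flow on $(-\infty,0]$ with $|Rm_\infty|(x_\infty,0) = 1$.

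Next I show $g_\infty$ is static Ricci-flat. Pass to a subsequence so that $x_n \to x_* \in M$ in $g(0)$, and let $\phi_n : U_n \to M$ be the convergence diffeomorphisms with $\phi_n(x_\infty) = x_n$. For any fixed $y_\infty \in M_\infty$ the $g_n$-distance $d_{g_n}(\phi_n(y_\infty), x_n)$ is bounded, so $d_{g(t_n)}(\phi_n(y_\infty), x_n) = O(Q_n^{-1/2}) \to 0$, whence $y_n := \phi_n(y_\infty) \to x_*$ in $g(0)$, and continuity gives $F(y_n) \to F(x_*)$. Scaling invariance of $\int |Ric|\,dt$ gives
\begin{equation*}
\int_a^0 |Ric_n|(y_n, s)\,ds \;=\; \int_{t_n + a/Q_n}^{t_n} |Ric|(y_n, t)\,dt
\end{equation*}
for each $a < 0$. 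For any fixed $\epsilon > 0$ and large $n$ the right side is bounded by $F(y_n) - \int_0^{T-\epsilon} |Ric|(y_n, t)\,dt$; since $|Ric|$ is smooth on $M \times [0, T-\epsilon]$ and $y_n \to x_*$, this tends to $\int_{T-\epsilon}^T |Ric|(x_*,t)\,dt$, which in turn tends to $0$ as $\epsilon \to 0$. Passing through the Cheeger-Gromov limit yields $\int_a^0 |Ric_\infty|(y_\infty, s)\,ds = 0$ for all $y_\infty$ and $a$, so $Ric_\infty \equiv 0$ and $g_\infty(s) \equiv g_\infty(0)$ is a complete Ricci-flat manifold with $|Rm_\infty|(x_\infty,0) = 1$.

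Finally I would appeal to the optimal Sobolev constant, which is the new ingredient signalled in the introduction. Perelman's monotonicity of the $\mu$- and $\nu$-entropies along $g_n$, together with the uniform metric equivalence above, propagates a sharp lower bound on the Sobolev constant into the limit, identifying it with the Euclidean one on $(M_\infty, g_\infty(0))$. A Ledoux-type rigidity statement for complete manifolds of nonnegative Ricci curvature that saturate the Euclidean Sobolev inequality then forces $(M_\infty, g_\infty(0))$ to be isometric to flat $\mathbb{R}^n$, contradicting $|Rm_\infty|(x_\infty,0) = 1$. The main obstacle I foresee is this last step: one must verify that Perelman's entropy and the corresponding Sobolev constant pass correctly to the non-compact pointed limit, and that the rigidity applies in the generality of a complete $\kappa$-noncollapsed Ricci-flat limit under a hypothesis that controls only the time integral of $|Ric|$ at each point rather than $|Ric|$ itself.
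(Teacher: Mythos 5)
Your setup and your Ricci-flatness argument are essentially the paper's: the same blow-up with Hamilton compactness and Perelman non-collapsing, and the same mechanism for killing $Ric_\infty$, namely that the scale-invariant quantity $\int |Ric|\,dt$ over the shrinking time window $[t_n+a/Q_n,\,t_n]$ is controlled by the tail of $F$, which vanishes by continuity (the paper phrases this via uniform convergence of $\int_s^T|Ric(x,t)|\,dt\to 0$ on the compact $M$, a Dini-type statement; your pointwise version at the limit point $x_*$ works equally well). The observation $e^{-2C}g(0)\le g(t)\le e^{2C}g(0)$ is correct but is not what the paper uses.

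The genuine gap is in your final step. Ledoux's rigidity requires the Sobolev inequality on $(M_\infty,g_\infty(0))$ with \emph{exactly} the optimal Euclidean constant $K(n,2)$, and neither Perelman's $\mu$/$\nu$-monotonicity nor the uniform metric equivalence can deliver that. Entropy monotonicity yields a log-Sobolev, hence a Sobolev, inequality on the limit with \emph{some} uniform constant depending on $g(0)$ and $T$ (this is Q.~Zhang/Ye-type work), but a compact initial manifold has strictly negative $\nu$-entropy, so the constant you inherit is strictly worse than the Euclidean one and Ledoux does not apply; monotonicity goes the wrong way for sharpness. Likewise, transporting a Sobolev inequality through metrics that are merely equivalent up to the fixed factor $e^{2C}$ distorts the constant by a factor depending on $C$, again destroying optimality. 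The paper's actual mechanism is different and uses the hypothesis a second time: (i) by Aubin's theorem, at a single time $t_0$ close to $T$ one has
\begin{equation*}
\Bigl(\int_M |u|^{2n/(n-2)}d\mu(t_0)\Bigr)^{(n-2)/n}\le (K(n,2)^2+\epsilon)\int_M|\nabla u|^2\,d\mu(t_0)+B(t_0)\int_M u^2\,d\mu(t_0);
\end{equation*}
(ii) choosing $t_0$ so that $\sup_x\int_{t_0}^T|Ric(x,t)|\,dt<\epsilon$ (this is where continuity of $F$ enters again), the inequality transfers to every $t_1\in[t_0,T)$ with the leading constant multiplied only by $e^{(3-2/n)\epsilon}$; (iii) under the blow-up by $Q_i\to\infty$ the zeroth-order term scales like $B(t_0)/Q_i$ and vanishes in the limit, so the limit manifold satisfies the Sobolev inequality with constant $(K(n,2)^2+\epsilon)e^{(3-2/n)\epsilon}$ and no lower-order term; (iv) $\epsilon\to 0$ gives the sharp inequality. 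Point (iii) — the disappearance of the non-scale-invariant $B$-term under blow-up — is the key idea your sketch is missing, and without it the appeal to Ledoux cannot be made.
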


\begin{remark}
Note that if $\int_0^T \sup_M |Ric(t)| dt  <\infty $, the dominated convergence theorem implies the continuity of $\int_0^T |Ric(x,t)| dt$, and we recover Theorem \ref{Bing Wang theorem}.
\end{remark}

The proof of Theorem \ref{main result 1} uses a blow-up argument. Recall that by Hamilton's compactness theorem (\cite{HR3}) and Perelman's no-local-collapsing theorem (\cite{Perelman}), we can choose a sequence of times and points $(x_i, t_i) \in M \times [0,T), i=1,2,...$, where $t_i \to T$, such that the sequence of dilated pointed solutions $(M,g_i(t), x_i)$ with $g_i(t)$ defined by
\begin{equation*}
g_i(t):= |Rm(x_i, t_i)|g(t_i+ \frac{t}{|Rm(x_i,t_i)|})
\end{equation*}
converges in the pointed Cheeger-Gromov sense to a complete limit solution $(M_\infty, $ $g_\infty (t), x_\infty)$, $t\in (-\infty, \omega)$, where $\omega $ is a positive number or $\infty$. It's important that this limit solution is non-flat when $T$ is a finite singular time, in particular $|Rm(x_\infty, 0)|=1$. This compactness result is very useful in studying the behavior of the Ricci flow at a singular time. For example, recall that (\cite{SN}) Natasa Sesum studied the volume growth of geodesic balls in $(M_\infty,g_\infty(0))$, and used the rigidity part of the volume comparison theorem to conclude that, if $|Ric(t)|$ is uniformly bounded for $t \in [0,T)$, then $(M_\infty,g_\infty(0))$ is isometric to the Euclidean space, hence contradicting with the non-flatness.

Under the assumption of Theorem \ref{main result 1}, we can establish an optimal Euclidean Sobolev inequality on $(M_\infty, g_\infty(0))$, then apply the following rigidity theorem of M. Ledoux to show that $(M_\infty,g_\infty(0))$ is isometric to the Euclidean space.

\begin{theorem}[M. Ledoux,\cite{Ledoux}]\label{Ledoux}
Let $(M,g)$ be a smooth, complete $n$-dimensional Riemannian manifold with nonnegative Ricci curvature. Suppose that for some $q\in [0,n)$, the Sobolev inequality
\begin{equation*}
\left(\int_M |u|^p d\mu \right)^{q/p} \leq K(n,q)^q \int_M |\nabla u|^q d\mu
\end{equation*}
is valid for $\forall u\in C_0^\infty (M)$, where $1/p=1/q-1/n$, $K(n,q)$ is the optimal Sobolev constant for the Euclidean space. Then $(M,g)$ is isometric to $(\mathbb{R}^n, g_{\text{\tiny{flat}}})$.
\end{theorem}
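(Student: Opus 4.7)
The plan is to reduce the statement to the rigidity case of the Bishop--Gromov volume comparison theorem by using the hypothesized Sobolev inequality to produce a \emph{lower} bound for the volume growth of geodesic balls that exactly matches the Euclidean upper bound coming from $\mathrm{Ric}\geq 0$. Fix a basepoint $x_0\in M$ and write $V(R):=\mathrm{Vol}(B(x_0,R))$. By Bishop--Gromov, $V(R)/(\omega_n R^n)$ is nonincreasing (where $\omega_n$ is the Euclidean unit ball volume); in particular $V(R)\leq \omega_n R^n$, and equality on all of $[0,\infty)$ would force $(M,g)$ isometric to $\mathbb{R}^n$ by the rigidity case.

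To build the matching lower bound I would test the Sobolev inequality on radial Lipschitz cutoffs. For $\epsilon>0$ set
\begin{equation*}
u_{R,\epsilon}(x) := \max\left(0,\,\min\left(1,\,\frac{R+\epsilon - d(x_0,x)}{\epsilon}\right)\right),
\end{equation*}
which equals $1$ on $B(x_0,R)$, vanishes outside $B(x_0,R+\epsilon)$, and satisfies $|\nabla u_{R,\epsilon}|\leq 1/\epsilon$ a.e. After mollifying into $C_0^\infty(M)$, the hypothesis yields
\begin{equation*}
V(R)^{q/p} \leq K(n,q)^{q}\,\epsilon^{-q}\bigl(V(R+\epsilon)-V(R)\bigr),
\end{equation*}
and letting $\epsilon\to 0^+$ gives the differential inequality $V(R)^{q/p}\leq K(n,q)^{q}V'(R)$. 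Using the relation $q/p = 1-q/n$, this rearranges to
\begin{equation*}
\frac{d}{dR}\bigl(V(R)^{q/n}\bigr) \geq \frac{q}{n}\,K(n,q)^{-q},
\end{equation*}
so integrating from $0$ to $R$ with $V(0)=0$ produces
\begin{equation*}
V(R) \geq \left(\frac{q}{n}\right)^{n/q} K(n,q)^{-n} R^{n}.
\end{equation*}

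At this point I would invoke the \emph{sharpness} of $K(n,q)$ as the Euclidean Sobolev constant. Because equality in the Sobolev inequality on $\mathbb{R}^{n}$ is (asymptotically) attained by the Aubin--Talenti extremals, running the same cutoff derivation in $\mathbb{R}^{n}$ and comparing with $V_{\mathrm{Euc}}(R)=\omega_n R^n$ forces the identity $(q/n)^{n/q}K(n,q)^{-n} = \omega_n$ (equivalently this follows from the known closed form for $K(n,q)$). Hence $V(R)\geq \omega_n R^n$ on $M$, which combined with the Bishop--Gromov upper bound gives $V(R)=\omega_n R^n$ for all $R>0$; the rigidity case of Bishop--Gromov then concludes $(M,g)\cong (\mathbb{R}^n, g_{\mathrm{flat}})$.

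The main obstacle is the constant-matching step, where the \emph{optimality} of the Sobolev constant in the hypothesis is indispensable: had we only assumed the inequality with some strictly larger constant, the cutoff argument would still give a proportional lower bound $V(R)\geq cR^n$, but with $c<\omega_n$, leaving a gap that Bishop--Gromov cannot close and destroying the rigidity. The remaining technical points -- passing from Lipschitz cutoffs to the $C_0^\infty$ Sobolev inequality by mollification, and handling differentiability of $V(R)$ -- are routine: $V$ is monotone and hence a.e. differentiable, and one may alternatively work throughout with the integrated (difference-quotient) form to avoid mentioning $V'$ at all.
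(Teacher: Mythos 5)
This statement is quoted from \cite{Ledoux} and the paper gives no proof of it, so the comparison is with Ledoux's own argument. Your overall architecture (a Euclidean lower bound $V(R)\geq\omega_n R^n$ for the volume of geodesic balls, combined with Bishop--Gromov monotonicity and its rigidity case) is exactly the right frame, and it is the frame of Ledoux's proof; but the step where you produce the lower bound has a genuine gap, in fact two. First, the limit $\epsilon\to 0^+$ does not yield the differential inequality $V(R)^{q/p}\leq K(n,q)^q V'(R)$ when $q>1$: your test function gives $\int|\nabla u|^q\leq \epsilon^{-q}\bigl(V(R+\epsilon)-V(R)\bigr)\sim \epsilon^{1-q}V'(R)$, which blows up as $\epsilon\to 0$, so the limiting inequality is vacuous (the argument is fine only for $q=1$, where the Sobolev extremals really are characteristic functions of balls). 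Second, the constant-matching identity $(q/n)^{n/q}K(n,q)^{-n}=\omega_n$ is false for $q>1$, and the justification you offer is circular: running the cutoff derivation in $\mathbb{R}^n$ only produces inequalities, so it cannot ``force'' an identity. Concretely, for $n=3$, $q=2$ one has $K(3,2)^2=\tfrac43|S^3|^{-2/3}$, and your claimed bound would read $V(R)\geq (2/3)^{3/2}K(3,2)^{-3}R^3=\pi^2 R^3/\sqrt2\approx 6.98\,R^3$, which already exceeds $\omega_3 R^3\approx 4.19\,R^3$ in Euclidean space itself; this shows the differential inequality you wrote cannot hold and that crude cutoffs, being far from the Aubin--Talenti extremals, cannot extract the sharp constant.

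The missing idea is precisely the heart of Ledoux's paper: to get the volume lower bound with the \emph{sharp} constant $\omega_n$ one must test the Sobolev inequality not with plateaux cutoffs but with (truncations of) the extremal profiles $u_\sigma(x)=\bigl(\sigma+d(x_0,x)^{q/(q-1)}\bigr)^{1-n/q}$, study the resulting function of $\sigma$ (essentially $\int_M(\sigma+d^{q/(q-1)})^{-n}d\mu$), and run an ODE comparison against the Euclidean model to conclude $\liminf_{R\to\infty}V(R)/R^n\geq\omega_n$; Bishop--Gromov monotonicity (which starts at $1$ at $R=0$ and is nonincreasing) then forces $V(R)\equiv\omega_n R^n$ and rigidity applies. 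So your reduction to Bishop--Gromov is correct, but as written the proof establishes only $V(R)\geq cR^n$ with a non-sharp $c$ (and even that requires replacing the invalid $\epsilon\to0$ limit by, say, a cutoff over an annulus $B(2R)\setminus B(R)$ or a capacity estimate), which, as you yourself note at the end, is not enough to close the argument.
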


\begin{remark}
 The value $K(n,q)$ is computed in \cite{T}.
\end{remark}
 Optimal constants in Sobolev inequalities have been studied by many authors, and one can refer to \cite{Hebey} for a comprehensive exposition. In the proof we need a theorem of T. Aubin \cite{Aubin}:

\begin{theorem}[T. Aubin]\label{Aubin}
Let (M,g) be a smooth, compact Riemannian $n-$manifold. For any $\epsilon> 0$ and any $q\in [1,n)$ real, there exists $B\in \mathbb{R}$ such that for any $u \in H_1^q(M)$,
\begin{equation*}
\left(\int_M |u|^p d\mu \right)^{q/p} \leq (K(n,q)^q+\epsilon) \int_M |\nabla u|^q d\mu + B\int_M u^q d\mu
\end{equation*}
where $1/p=1/q-1/n$ and $K(n,q)$ is the optimal Sobolev constant for the Euclidean space.
\end{theorem}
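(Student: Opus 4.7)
The plan is to globalize the sharp Euclidean Sobolev inequality $\|v\|_p^q \leq K(n,q)^q \|\nabla v\|_q^q$ to the compact manifold $M$ through a localization plus partition-of-unity argument, with the linear $L^q$ term on the right absorbing the cross terms produced by differentiating the cutoff functions.

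First I would establish a local version in normal coordinates. Since $M$ is smooth, for every $x_0\in M$ and every $\eta>0$ there is a geodesic ball $B_r(x_0)$ on which the metric satisfies $(1-\eta)\delta_{ij}\leq g_{ij}\leq(1+\eta)\delta_{ij}$; transporting the sharp Euclidean Sobolev inequality through this chart and absorbing the $\eta$-error into the constant gives, for every $v\in C_0^\infty(B_r(x_0))$,
\[
\left(\int_M |v|^p\, d\mu_g\right)^{q/p} \leq (K(n,q)^q+\epsilon/2)\int_M |\nabla v|^q\, d\mu_g,
\]
provided $\eta$ (and hence $r$) is chosen sufficiently small in terms of $\epsilon$.

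Next I would cover $M$ by finitely many such balls $B_r(x_i)$, $i=1,\dots,N$, and pick a smooth partition of unity $\{\alpha_i\}$ subordinate to this cover with $\sum_i \alpha_i^p=1$ pointwise. The identity $|u|^p=\sum_i(\alpha_i u)^p$ combined with the concavity bound $(\sum_i a_i)^{q/p}\leq \sum_i a_i^{q/p}$ (valid since $q/p<1$) and the local inequality applied to each $\alpha_i u$ yields
\[
\|u\|_p^q \leq (K(n,q)^q+\epsilon/2)\sum_i \int_M |\nabla(\alpha_i u)|^q\, d\mu_g.
\]
Expanding by Young's inequality, $|\nabla(\alpha_i u)|^q\leq(1+\delta)\alpha_i^q|\nabla u|^q+C_\delta|\nabla\alpha_i|^q|u|^q$, so the contribution from the $|\nabla\alpha_i|^q$ terms is $C_\delta\int(\sum_i|\nabla\alpha_i|^q)|u|^q$, which is bounded by $B\int|u|^q\,d\mu_g$ since $\sum_i|\nabla\alpha_i|^q$ is a fixed bounded function on $M$.

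The main obstacle is controlling the first term: I must arrange that $\sum_i\alpha_i^q\leq 1+\epsilon'$ pointwise, so that the coefficient of $\int|\nabla u|^q$ remains within $K(n,q)^q+\epsilon$. The constraint $\sum\alpha_i^p=1$ with $q<p$ only gives the H\"older bound $\sum\alpha_i^q\leq N^{1-q/p}$, which is too crude. To obtain the sharp constant I would refine the cover to have multiplicity at most two and construct $\alpha_i$ so that on each two-fold overlap the pair $(\alpha_i,\alpha_j)$ stays concentrated near an axis of the curve $\alpha_i^p+\alpha_j^p=1$; this forces $\alpha_i^q+\alpha_j^q\leq 1+\epsilon'$ everywhere, with $\epsilon'$ controlled by the radius $r$. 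Tuning the parameters $r,\eta,\delta,\epsilon'$ small enough in terms of $\epsilon$ then yields the inequality with the claimed constant $K(n,q)^q+\epsilon$ and a finite $B$ depending on $\epsilon$, $q$, $M$, and $g$.
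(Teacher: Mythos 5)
The paper does not prove this statement; it is quoted directly from Aubin's 1976 paper, so your attempt must be measured against the standard argument in the literature (Aubin; Hebey's lecture notes). Your localization step in normal coordinates is fine, but the partition-of-unity step contains a genuine gap that your proposed fix cannot repair. With the normalization $\sum_i \alpha_i^p=1$ and the subadditivity $(\sum_i a_i)^{q/p}\le\sum_i a_i^{q/p}$, the coefficient of $\int_M|\nabla u|^q\,d\mu$ after summation is governed by $\sup_M\sum_i\alpha_i^q$, and for continuous nonnegative $\alpha_i$ with $\sum_i\alpha_i^p=1$ this supremum is bounded below by $2^{1-q/p}>1$, uniformly away from $1$: each $\alpha_i$ must descend continuously from $1$ to $0$, hence takes the value $2^{-1/p}$ at some point $x$, where $\sum_{j\neq i}\alpha_j^p(x)=1/2$ and therefore $\sum_j\alpha_j^q(x)\ge 2^{-q/p}+2^{-q/p}=2^{1-q/p}$. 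So keeping $(\alpha_i,\alpha_j)$ ``near an axis'' is impossible by the intermediate value theorem. In addition, a cover of a compact $n$-manifold by open sets of multiplicity at most two does not exist for $n\ge 2$ (the Lebesgue covering dimension is $n$), so the combinatorial half of your fix also fails. Since $|\nabla u|^q$ may concentrate precisely near the crossover points, you cannot dismiss the bad set as negligible; as written, your argument only yields the constant $2^{1-q/p}K(n,q)^q+\epsilon$, not the sharp one.

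The repair is to change the normalization and the inequality used on the left-hand side. Take $\sum_i\alpha_i=1$ with $\alpha_i^{1/q}$ Lipschitz (e.g.\ $\alpha_i=\eta_i^{2q}/\sum_j\eta_j^{2q}$ for cutoffs $\eta_i$), write $\|u\|_p^q=\bigl\|\sum_i\alpha_i|u|^q\bigr\|_{p/q}\le\sum_i\bigl\|\alpha_i^{1/q}u\bigr\|_p^q$ using Minkowski's inequality in $L^{p/q}$ (the genuinely stronger substitute for the elementary bound $(\sum_i a_i)^{q/p}\le\sum_i a_i^{q/p}$), and apply the local inequality to each $\alpha_i^{1/q}u$. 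After Young's inequality the leading term is $(1+\delta)\sum_i\alpha_i|\nabla u|^q=(1+\delta)|\nabla u|^q$ exactly, and the cross terms $\sum_i|\nabla\alpha_i^{1/q}|^q|u|^q$ are absorbed into $B\int_M|u|^q\,d\mu$. This is Aubin's original argument and it gives the theorem as stated.
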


\begin{remark}
T. Aubin's theorem has been improved by E. Hebey and M.Vaugon (\cite{HV1}, \cite{HV2}, ) who showed that the $\epsilon$ in Theorem \ref{Aubin} can be removed, in both compact and complete settings.
\end{remark}

\begin{remark}
T. Aubin's theorem implies that for any $\epsilon >0$, we have a family of Sobolev inequalities in the form
\begin{equation*}
\left( \int_M |u|^{2n/(n-2)} d\mu(t) \right)^{(n-2)/n} \leq (K(n,2)^2+\epsilon) \int_M |\nabla u|^2_{g(t)} d\mu(t) + B(t)\int_M u^2 d\mu(t),
\end{equation*}
along the flow. The proof of Theorem \ref{main result 1} implies that $B(t)$ must blow up at a finite singular time. It will be very interesting to get an upper bound estimate of $B(t)$ explicitly in terms of curvature, however such an estimate is not yet available to our knowledge.
\end{remark}

Space-time integral bounds on the curvature have also been considered by many authors. In \cite{WB2} B. Wang proved that if
\begin{equation}\label{critical}
  \int_0^T\int_{M} |Rm|^{p} d\mu dt < \infty,\quad p\geq \frac{n}{2}+1,
\end{equation}
then the Ricci flow can be extended past time $T$. Similar results are also proved in \cite{Ye} by R. Ye and in \cite{MaCheng} by L. Ma and L. Cheng. Note that the power $\frac{n}{2}+1$ is critical, which makes the integral scaling invariant. If $p < \frac{n}{2}+1$, the integral in (\ref{critical}) can be bounded even when $T$ is a singular time, as demonstrated by the shrinking sphere (See Example 2.1 in \cite{WB2}).

For the mean curvature flow, the same extension problem has also been studied. The supremum and certain scaling invariant space-time integrals of the norm of the second fundamental form are known to blow up at a finite singular time (\cite{Huisken}, \cite{SeLe1}, \cite{SeLe2}, \cite{XYZ}). Moreover, the surprising fact that a subcritical integral quantity has to blow up was proved by N. Le in \cite{Le}:

\begin{theorem}[N.Le]\label{Le}
 Let $A(t)$ be the second fundamental form of a $n-$dimensional compact hyper-surface without boundary $M_t$ in $\mathbb{R}^{n+1}$ evolving by the mean curvature flow. If
 \begin{equation*}
  \int_0^T\int_{M_t} \frac{|A|^{n+2}}{\log(1+|A|)} d\mu dt < \infty,
 \end{equation*}
 then the flow can be extended past time $T$.
\end{theorem}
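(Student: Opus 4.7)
The plan is to argue by contradiction following a blow-up analysis analogous to the Ricci-flow strategy employed earlier in the paper. Suppose the flow cannot be extended past $T$; Huisken's classical extension theorem then forces $\sup_{M_t}|A|\to\infty$ as $t\to T$. Using Hamilton's point-picking I would choose a sequence $(x_i,t_i)$ with $t_i\to T$ and $Q_i:=|A(x_i,t_i)|^2\to\infty$ such that on a parabolic neighborhood of $(x_i,t_i)$ one has $|A|\leq 2\sqrt{Q_i}$. Rescaling by $\lambda_i=\sqrt{Q_i}$ and recentering yields pointed mean curvature flows $\widetilde M_i(s)$ in $\mathbb{R}^{n+1}$ with $|\widetilde A_i|\leq 2$ and $|\widetilde A_i(0,0)|=1$. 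By the compactness theorem for MCF (Ecker--Huisken local regularity together with uniform curvature bounds), a subsequence converges in $C^\infty_{\mathrm{loc}}$ to a nontrivial eternal limit $(M_\infty(s))_{s\in(-\infty,0]}$ with $|A_\infty(0,0)|=1$.

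The next step is to transfer the integral hypothesis to the limit. Since $|A|^{n+2}\,d\mu\,dt$ is scale invariant under parabolic rescaling, for any parabolic cylinder $K$ in the rescaled picture with preimage $K'_i\subset M\times[t_i,T)$ we have
\begin{equation*}
\int_{K} \frac{|\widetilde A_i|^{n+2}}{\log(1+\sqrt{Q_i}\,|\widetilde A_i|)}\, d\widetilde\mu_i\, ds \;=\; \int_{K'_i} \frac{|A|^{n+2}}{\log(1+|A|)}\, d\mu\, dt.
\end{equation*}
Absolute continuity of the original integral forces the right-hand side to $0$ as $i\to\infty$. Together with $C^\infty_{\mathrm{loc}}$-convergence this should constrain $M_\infty$ severely, and the aim is to extract from it the rigidity $A_\infty\equiv 0$, contradicting $|A_\infty(0,0)|=1$.

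Here lies what I expect to be the main obstacle. On a region where $|\widetilde A_i|$ stays of order one, $\log(1+\sqrt{Q_i}\,|\widetilde A_i|)\approx \tfrac12\log Q_i$, so a single blow-up extracts from the hypothesis only that $\int_K |A_\infty|^{n+2}\,d\mu_\infty\,ds$ is bounded by $C\log Q_i\cdot o(1)$, which is consistent with any fixed positive value. The logarithmic weight is too weak to be turned into pointwise curvature information in one scaling step; one really needs to exploit the fact that $|A|$ is large on a thick set at the singular time, not just at a single sequence of peak points.

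To close the argument I would combine the blow-up with a Moser--Stampacchia iteration on $|A|^2$ driven by the evolution inequality $\partial_t|A|^2\leq\Delta|A|^2+2|A|^4$, using the Michael--Simon Sobolev inequality on the hypersurfaces $M_t\subset\mathbb{R}^{n+1}$. The critical integrability $|A|\in L^{n+2}$ sits exactly on the boundary where such an iteration fails, and the logarithmic improvement in the hypothesis should supply the missing margin: each recursion step would lose only a $1/\log$ factor, which can be absorbed if the iteration is organized to run over $O(\log Q_i)$ levels. An alternative and probably cleaner route would bypass the blow-up entirely and work on the original flow via an Orlicz-type Sobolev inequality naturally paired with a log-weighted $L^{n+2}$ norm. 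Either way, the central analytic challenge is bridging the logarithmic gap between critical integrability and pointwise curvature rigidity, and I expect this to be the technical heart of the proof.
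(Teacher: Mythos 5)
Your proposal correctly diagnoses its own fatal obstacle but does not overcome it, and the actual proof takes a different and much more elementary route. As you observe, a single parabolic blow-up destroys the hypothesis: the rescaled integrand carries a factor $1/\log(1+\sqrt{Q_i}\,|\widetilde A_i|)\approx 2/\log Q_i$, so the scale-invariant quantity $\int|A_\infty|^{n+2}$ on the limit is only bounded by $\log Q_i\cdot o(1)$, which says nothing. Your proposed repairs --- a Moser iteration ``organized to run over $O(\log Q_i)$ levels,'' or an Orlicz--Sobolev inequality --- are not carried out, and the claim that each recursion step loses only a $1/\log$ factor is unsupported. As written the proposal is a program, not a proof.

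The argument of Le (sketched in the introduction of the paper, and replicated for the Ricci flow in Section 3) never tries to pass the log-weighted integral through a blow-up. Step one: use the Michael--Simon inequality plus Moser iteration to prove a mean-value inequality at the \emph{supercritical} power $n+3$, namely $\sup_{M_t}|A|\le C(M_0)\bigl(1+\int_0^t\int_{M_s}|A|^{n+3}\bigr)$; because $n+3>n+2$ this sits strictly above the critical exponent and the iteration has no borderline difficulty. Step two: write $|A|^{n+3}=\psi(|A|)\cdot\frac{|A|^{n+2}}{\log(1+|A|)}$ with $\psi(s)=s\log(1+s)$, pull $\psi\bigl(\sup_{M_s}|A|\bigr)$ out of the spatial integral, and set $f(t)=\sup_{M_t}|A|$ and $G(t)=\int_{M_t}\frac{|A|^{n+2}}{\log(1+|A|)}\,d\mu$. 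One obtains $f(t)\le C\int_0^t\psi(f(s))G(s)\,ds+C_1$ with $G\in L^1([0,T))$ by hypothesis, and the Osgood criterion $\int_1^\infty\frac{ds}{s\log(1+s)}=\infty$ forces $f$ to stay bounded up to $T$; Huisken's theorem then extends the flow. The divergence of this integral is precisely where the logarithmic weight is exploited --- not in any rescaling or iteration count. This decomposition and ODE comparison is the missing idea in your proposal.
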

Note that $n+2$ is the critical power in the mean curvature flow case, and that the integral quantity in N. Le's theorem is sub-scaling invariant.

One of the key elements in the proof is the Michael-Simon inequality, which one uses to establish a Sobolev inequality, then applying Nash-Moser iteration to prove the following mean-value type inequality:
\begin{equation}\label{mean value ineq}
 \sup_{M_t} |A(t)|\leq C(M_0) \left( 1+ \int_0^T\int_{M_t} |A|^{n+3} \right).
\end{equation}
Theorem \ref{Le} then follows an elementary calculus method.

Our second result is a Ricci flow version of N. Le's theorem.

\begin{theorem}\label{subcritical}
 Let $(M,g(t)), t\in[0,T)$ be a Ricci flow solution. If
 \begin{equation*}
  \int_0^T\int_{M} \frac{|Rm|^{n/2+1}}{\log(1+|Rm|)} d\mu dt < \infty,
 \end{equation*}
 Then the flow can be extended past time $T$.
\end{theorem}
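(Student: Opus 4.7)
The plan is to adapt N.~Le's three-step strategy behind Theorem \ref{Le} to the Ricci flow setting. The three ingredients are a time-uniform Sobolev inequality along the flow, a parabolic Nash-Moser iteration yielding a mean-value bound for $\sup|Rm|^2$, and an elementary calculus argument that exploits the sub-scaling invariance of the hypothesis to contradict the assumption that $T$ is a singular time.

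For the first ingredient, I would invoke R.~Ye's preservation theorem for the Sobolev inequality along the Ricci flow: there exist constants $A,B<\infty$, depending only on $g(0)$ and $T$, such that
\[
\left(\int_M|u|^{2n/(n-2)}\,d\mu(t)\right)^{(n-2)/n}\leq A\int_M|\nabla u|^2\,d\mu(t)+B\int_M u^2\,d\mu(t)
\]
for all $u\in C^\infty(M)$ and all $t\in[0,T)$. Combined with the standard Bochner-type evolution inequality $\partial_t|Rm|^2\leq\Delta|Rm|^2-2|\nabla Rm|^2+C_n|Rm|^3$, a Nash-Moser iteration at the super-critical exponent $n/2+2$, in the spirit of \cite{WB2}, should produce a mean-value estimate of the form
\[
\sup_{M\times[0,T)}|Rm|^2\leq C\left(1+\int_0^T\!\!\int_M|Rm|^{n/2+2}\,d\mu\,dt\right),
\]
in direct analogy with Le's inequality \eqref{mean value ineq}. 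Here $C$ should depend only on $g(0)$, on $T$, and on the uniform Sobolev constants, but not on any intermediate value of $\sup|Rm|$.

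The final step is an elementary calculus trick. Writing
\[
|Rm|^{n/2+2}=|Rm|\log(1+|Rm|)\cdot\frac{|Rm|^{n/2+1}}{\log(1+|Rm|)}
\]
and using that $x\mapsto x\log(1+x)$ is monotone increasing, we obtain
\[
\int_0^T\!\!\int_M|Rm|^{n/2+2}\,d\mu\,dt\leq M\log(1+M)\cdot h,
\]
where $M:=\sup_{M\times[0,T)}|Rm|$ and $h:=\int_0^T\int_M|Rm|^{n/2+1}/\log(1+|Rm|)\,d\mu\,dt<\infty$ by hypothesis. Substituting into the mean-value inequality yields $M^2\leq C+CM\log(1+M)\cdot h$. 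If $M$ were infinite, then for $M$ sufficiently large one would have $CM\log(1+M)\cdot h\leq M^2/2$ (since $\log(1+M)/M\to 0$), forcing $M^2\leq 2C$, a contradiction. Hence $\sup_{M\times[0,T)}|Rm|<\infty$ and the flow extends by Hamilton's original criterion.

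The main obstacle I anticipate is the Moser iteration of Step~2: it must be arranged so that (a) the constant $C$ depends only on the initial data and the uniform Sobolev constants, not on intermediate values of $\sup|Rm|$; and (b) the left-hand side is a genuine higher power of $M$ (here $M^2$) rather than $M$ itself, so that the logarithmic factor on the right is dominated and the bootstrap closes. Carefully tracking the precise exponents through the iteration at the super-critical exponent $n/2+2$, and handling the tensorial cubic reaction term $|Rm|^3$ by absorption via the uniform Sobolev inequality, are the most delicate technical points; the overall structure closely parallels \cite{Le}, but the constants must be verified to have the right uniform dependence in the Ricci flow setting.
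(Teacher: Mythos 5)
There is a genuine gap, and it is fatal to the argument as written: the mean-value inequality you propose in Step 2,
\begin{equation*}
\sup_{M\times[0,T)}|Rm|^2\leq C\Bigl(1+\int_0^T\!\!\int_M|Rm|^{n/2+2}\,d\mu\,dt\Bigr),
\end{equation*}
is false. Test it on the shrinking round sphere: there $|Rm|\sim (T-t)^{-1}$ and $\mathrm{Vol}(M,g(t))\sim (T-t)^{n/2}$, so on $[0,t]$ the right-hand side grows like $\int_0^t(T-s)^{-2}\,ds\sim (T-t)^{-1}$ while the left-hand side grows like $(T-t)^{-2}$. This is exactly the scaling obstruction: under $g\mapsto\lambda g$, $t\mapsto\lambda t$, the integral $\int\!\!\int|Rm|^{n/2+2}\,d\mu\,dt$ scales like $\lambda^{-1}$, the same as $\sup|Rm|$ to the \emph{first} power; the only inequality one can hope for (and the one Le's inequality (\ref{mean value ineq}) actually is, with $\sup|A|$ rather than $\sup|A|^2$ on the left) is
\begin{equation*}
\sup_{M\times[0,t]}|Rm|\leq C_0\int_0^t\!\!\int_M|Rm|^{n/2+2}\,d\mu\,ds+C_1 .
\end{equation*}
With the first power on the left your Step 3 collapses: the resulting bound $M\leq C_1+C\,h\,M\log(1+M)$ cannot be closed by absorption, since for large $M$ the right-hand side dominates the left. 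The conclusion must instead be extracted by the ODE comparison that Le uses and the paper follows: with $f(t)=\sup_M|Rm(t)|$, $\psi(s)=s\log(1+s)$ and $h(t)=C\int_0^t\psi(f)G\,ds+C_1$ one has $h'\leq C\psi(h)G$, hence $\int_{h(0)}^{h(t)}\psi(s)^{-1}ds\leq C\int_0^T G\,dt<\infty$, and since $\int_1^\infty\psi(s)^{-1}ds=\infty$ this bounds $h$, hence $f$. That step cannot be replaced by your absorption argument.

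Separately, the route to the (correct, first-power) mean-value inequality need not go through a uniform Sobolev inequality and Moser iteration at all; the delicate point you flag, namely that the Moser constant must not depend on intermediate curvature bounds, is real and is precisely what the paper avoids. The paper proves the inequality by contradiction: if it failed with constants $P_i\to\infty$, one rescales at the maximum of $|Rm_i|$, uses the doubling-time estimate (Lemma \ref{doubling-time estimate}) to guarantee a backward existence interval of definite size, extracts a pointed Cheeger--Gromov limit via $\kappa$-non-collapsing, and finds that the limit must be flat at the base point while also satisfying $|\widetilde{Rm}|(x_\infty,0)=1$ (Lemmas \ref{normalizedmvi} and \ref{prop}). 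To salvage your approach you would need either to run this compactness argument, or to carry out the Moser iteration so carefully that the constant depends only on $g(0)$, $T$ and the Sobolev constants and the output is the scaling-consistent first-power estimate --- and in either case you must finish with the ODE comparison rather than absorption.
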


In the Ricci flow case, we can use a blow-up argument and apply the `doubling-time estimate' (Corollary 7.5 in \cite{ChowKnopf}) to establish an inequality similar to (\ref{mean value ineq}), then use the same calculus method to prove Theorem \ref{subcritical}.

\section{Proof of Theorem \ref{main result 1}}
\begin{proof}[Proof of Theorem \ref{main result 1}]
We claim that under the assumption of the theorem, the sectional curvature $|Rm|$ is bounded, hence the flow can be extended by Hamilton's result (Theorem 14.1 in \cite{HR1}).

If the claim is not true, we can choose a sequence of times and points $(x_i, t_i) \in M \times [0,T), i=1, 2, ...$, such that  $Q_i:=|Rm(x_i,t_i)|\to \infty $ and $t_i \to T$ as $i \to \infty$, and the sequence of dilated pointed solutions $(M,g_i(t), x_i)$ with $g_i(t)$ defined by
\begin{equation*}
g_i(t):= Q_ig(t_i+ \frac{t}{Q_i})
\end{equation*}
converges in the pointed Cheeger-Gromov sense to a non-flat limit solution $(M_\infty, g_\infty,\\
 x_\infty)$ (See Chapter 8 of \cite{ChowKnopf}). In the following we use $\phi_i, i=1,2,...$ to denote the diffeomorphisms in the pointed Cheeger-Gromov limit ( See Chapter $3$ of \cite{CHOW1} for a detailed definition). Also, we use $R^+$ and $R^-$ to denote the positive and negative parts of the scalar curvature, and $\lambda$ is the negative part of the smallest eigenvalue of the Ricci curvature.

\indent
We first prove that ($M_\infty, g_\infty$) has nonnegative Ricci curvature (We actually prove it is Ricci-flat). By the continuity assumption on $F(x):= \int_0^T|Ric(x,t)|dt$ and the compactness of $M$, we can use elementary arguments to prove that
\begin{equation*}
 \lim_{s \to T} \int_s^T |Ric(x,t)| dt =0 \quad \text{uniformly for}\quad \forall x \in M.
\end{equation*}
Then we compute
\begin{eqnarray*}
\int_{-1}^0 |Ric_{g_\infty(t)}|(x) dt &=& \lim_{i \to \infty} \int_{-1}^0 |Ric_{\phi_i^* g_i(t)}|(x) dt       \\
                                       &=& \lim_{i \to \infty} \int_{-1}^0 |Ric_{ g_i(t)}|(\phi_i (x)) dt      \\
                                       &=& \lim_{i \to \infty} \int_{t_i-1/{Q_i}}^{t_i} |Ric_{ g(t)}|(\phi_i (x)) dt \\
                                       &\leq&  \lim_{i \to \infty} \int_{t_i-1/{Q_i}}^{t_i} |Ric_{g(t)}|(\phi_i(x))dt \\
                                       &\leq&  \lim_{i \to \infty} \int_{t_i-1/{Q_i}}^{T} |Ric_{g(t)}|(\phi_i(x))dt \\
                                       &=& 0.
\end{eqnarray*}
Which implies $|Ric_{g_\infty(t)}|(x)=0, \forall x \in M_\infty, \forall t \in [-1,0]$.

Next we establish a Sobolev inequality on $M_\infty$. Observe that
\begin{equation*}
\frac{d}{dt} |\nabla u|^2_g(t)(x)= 2 Ric(t)(\nabla u, \nabla u),
\end{equation*}
and
\begin{equation*}\label{derivative of volume form}
\frac{d}{dt}d\mu_{g(t)}(x)= - R(x,t)d\mu_{g(t)}(x) .
\end{equation*}
These imply that
\begin{equation*}
|\nabla u|^2(x,t_0)\leq |\nabla u|^2(x,t_1) e^{2\int_{t_0}^{t_1}\lambda(x,t)dt},
\end{equation*}
and
\begin{equation*}
e^{-\int_{t_0}^{t_1}R^+(x,t)dt} d\mu(x,t_0) \leq d\mu(x,t_1) \leq e^{\int_{t_0}^{t_1}R^-(x,t)dt} d\mu(x,t_0).
\end{equation*}

Now we need the continuity of $F(x):= \int_0^T|Ric(x,t)|dt$ and the compactness of $M$ again. For any $\epsilon > 0$, by elementary analysis we can find $t_0(\epsilon)$ such that $\forall t_2> t_1 \geq t_0$, we have
\begin{eqnarray*}
0\leq \int_{t_1}^{t_2} R^-(x,t)dt \leq n\int_{t_1}^{t_2} |Ric(x,t)| dt<\epsilon;\\
0\leq \int_{t_1}^{t_2} R^+(x,t)dt \leq n\int_{t_1}^{t_2} |Ric(x,t)|dt <\epsilon; \\
0\leq \int_{t_1}^{t_2} \lambda(x,t)dt \leq \int_{t_1}^{t_2} |Ric(x,t)|dt <\epsilon;
\end{eqnarray*}
for all $x\in M$.

Theorem \ref{Aubin} implies that we have a Sobolev inequality at the time $t_0$:
\begin{equation*}
\left( \int_M |u|^{2n/(n-2)} d\mu(t_0) \right)^{(n-2)/n}
\leq (K(n,2)^2+\epsilon) \int_M |\nabla u|^2_{g(t_0)} d\mu(t_0)
                                             + B(t_0)\int_M u^2 d\mu(t_0),
\end{equation*}
for any $ u \in H^2_1(M)$.
Then the above observation implies that for any $t_1\in [t_0, T) $, $(M, g(t_1))$ has a Sobolev inequality:
\begin{eqnarray*}
\left( \int_M |u|^{2n/(n-2)} d\mu(t_1) \right)^{(n-2)/n} &\leq& (K(n,2)^2+\epsilon) e^{(3-2/n)\epsilon} \int_M |\nabla u|^2_{g(t_1)} d\mu(t_1) \\
                                            && + B(t_0)e^{(2-2/n)\epsilon}\int_M u^2 d\mu(t_1),
\end{eqnarray*}
for any $u \in H^2_1(M)$.

Now we pick any $u \in C_0^\infty(M_\infty)$, and suppose $u$ is supported on a compact domain $V$. The idea is to push $u$ forward by $\phi_i$ to a function on $(M, g(t_i))$ for each $i$ s.t. $t_i> t_0$, apply the Sobolev inequality, then pull back to $(M_\infty, g_\infty(0))$ and take the limit in $i$. We compute:
\begin{eqnarray*}
&& \left( \int_{M_\infty} |u|^{2n/(n-2)} d\mu_{g_\infty(0)} \right)^{(n-2)/n} \\
&=& \lim_{i\to \infty} \left( \int_V |u|^{2n/(n-2)} d\mu_{\phi_i^* g_i(0)} \right)^{(n-2)/n} \\
                                                              &=& \lim_{i\to \infty} \left( Q_i^{n/2}\int_{\phi_i(V)} |u\circ \phi_i^{-1}|^{2n/(n-2)} d\mu_{ g(t_i)} \right)^{(n-2)/n} \\
                                                              &\leq& \lim_{i\to \infty} Q_i^{(n-2)/n} \Big[ (K(n,2)^2+\epsilon) e^{(3-2/n)\epsilon} \int_{\phi_i(V)} |\nabla (u\circ \phi_i^{-1})|^2_{g(t_i)} d\mu_{g(t_i)}  \\
                                                              &&+ B(t_0)e^{(2-2/n)\epsilon}\int_{\phi_i(V)} u^2 d\mu_{g(t_i)}   \Big]                      \\
                                                              &=& \lim_{i\to \infty} \Big[ (K(n,2)^2+\epsilon) e^{(3-2/n)\epsilon} \int_{\phi_i(V)} |\nabla (u\circ \phi_i^{-1})|^2_{g_i(0)} d\mu_{g_i(0)} \\
                                                              &&+\frac{B(t_0)e^{(2-2/n)\epsilon}}{Q_i}\int_{\phi_i(V)} u^2 d\mu_{g_i(0)} \Big]                         \\
                                                              &=& (K(n,2)^2+\epsilon) e^{(3-2/n)\epsilon} \int_{M_\infty} |\nabla u|^2_{g_\infty(0)} d\mu_{g_\infty(0)} .    \\
\end{eqnarray*}
Since $\epsilon$ is arbitrary, we can let it go to zero. Then we establish the optimal Euclidean Sobolev inequality
\begin{equation*}
\left( \int_{M_\infty} |u|^{2n/(n-2)} d\mu_{g_\infty(0)} \right)^{(n-2)/n} \leq K(n,2)^2 \int_{M_\infty} |\nabla u|^2_{g_\infty(0)} d\mu_{g_\infty(0)}
\end{equation*}
on $(M_\infty, g_\infty(0))$.

By Theorem \ref{Ledoux}, $(M_\infty, g_\infty(0))$ is isometric to the Euclidean space, contradicting the non-flatness of $g_\infty(0)$.

\end{proof}


\section{Proof of Theorem \ref{subcritical}}
To prove Theorem \ref{subcritical}, we first establish a similar inequality to  (\ref{mean value ineq}) by a compactness argument. We need the following `doubling-time estimate', which is Corollary 7.5 in \cite{ChowKnopf}:

\begin{lemma}[Doubling-time estimate]\label{doubling-time estimate}
There exists $c(n)$ depending only on $n$, such that if $(M,g(t),t\in[0,T))$ is a Ricci flow solution on a compact manifold of dimension $n$, then
\begin{equation*}
\sup_M |Rm(t)| \leq 2\sup_M |Rm(0)| \quad \text{for all times} \quad t\in [0,\frac{c(n)}{\sup_M|Rm(0)|}).
\end{equation*}
\end{lemma}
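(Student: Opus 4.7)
The plan is to reduce the statement to an ODE comparison on the spatial supremum of $|Rm|^{2}$, in the spirit of Hamilton's original doubling-time argument. First, I would start from the standard evolution inequality for the squared norm of the Riemann tensor along the Ricci flow,
\begin{equation*}
\frac{\partial}{\partial t}|Rm|^{2} \leq \Delta |Rm|^{2} + C(n)\,|Rm|^{3},
\end{equation*}
which is obtained from Hamilton's identity $\partial_{t}|Rm|^{2} = \Delta|Rm|^{2} - 2|\nabla Rm|^{2} + \text{schematic } Rm \ast Rm \ast Rm$ by discarding the nonpositive gradient term and bounding the cubic contraction by a dimensional constant.

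Next, I would set $f(t) := \sup_{M}|Rm(\cdot,t)|^{2}$ and apply the maximum principle in its Lipschitz form (as presented in Chow--Knopf, Chapter 3). The Laplacian contribution drops out at an interior spatial maximum, leaving the scalar differential inequality
\begin{equation*}
f'(t) \leq C(n)\,f(t)^{3/2}
\end{equation*}
in the sense of forward difference quotients. Comparing with the ODE $h' = C(n)h^{3/2}$ with initial value $h(0)=f(0)$, an explicit integration gives
\begin{equation*}
h(t) = \bigl(h(0)^{-1/2} - \tfrac{C(n)}{2}\,t\bigr)^{-2},
\end{equation*}
so that $f(t) \leq h(t) \leq 4f(0)$ precisely when $h(0)^{-1/2} - \tfrac{C(n)}{2}t \geq \tfrac{1}{2}h(0)^{-1/2}$, i.e., for $t \leq 1/(C(n)\sup_{M}|Rm(0)|)$. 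Taking square roots converts this into the desired bound $\sup_{M}|Rm(t)| \leq 2\sup_{M}|Rm(0)|$, with $c(n) := 1/C(n)$.

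The only nonroutine point is the appeal to the Lipschitz maximum principle to produce the scalar ODE inequality for $f$; this is standard but must be invoked with a little care, since $\sup_{M}|Rm(\cdot,t)|^{2}$ need not be smooth in $t$ and one has to work with upper Dini derivatives. Computing the sharp dimensional constant $C(n)$ from the algebraic curvature operator is mere bookkeeping, and its precise value is irrelevant for the application to Theorem \ref{subcritical} in the next step of the argument.
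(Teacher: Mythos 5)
Your argument is correct and is essentially the standard proof of this estimate: the paper itself does not prove the lemma but simply quotes it as Corollary 7.5 of Chow--Knopf, and that reference derives it exactly as you do, from the reaction--diffusion inequality $\partial_t|Rm|^2 \leq \Delta|Rm|^2 + C(n)|Rm|^3$ together with the scalar maximum principle (in Dini-derivative form) and comparison with the ODE $h' = C(n)h^{3/2}$. The only degenerate case worth a passing remark is $\sup_M|Rm(0)|=0$, where the flow is static and the conclusion is trivial.
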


\begin{lemma}\label{normalizedmvi}
 Let $\mathcal{M}=\{ g(t)| t\in[0,1], \quad g(t) \quad \text{has non-collapsing constant } \linebreak
 \kappa , \quad  \sup_M |Rm(0)|\leq C_0\}$ be a set of nonsingular Ricci flow solutions on a closed $n$-dimensional manifold $M$. There exists a constant $C(n,\kappa, C_0)$ such that for any $g(t) \in \mathcal{M}$
 \begin{equation*}
  \sup_{M\times[0,1]}|Rm| \leq C \int_0^1\int_M |Rm|^{n/2+2}d\mu dt + 2C_0.
 \end{equation*}
\end{lemma}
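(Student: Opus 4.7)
The plan is to argue by contradiction via a parabolic blow-up, with the doubling-time estimate keeping the blow-up base-time away from $t=0$. Suppose the lemma fails; then for every $i \in \mathbb{N}$ there exist $g_i(t) \in \mathcal{M}$ and $(x_i, t_i) \in M \times [0,1]$ realising
$$Q_i := |Rm_{g_i}|(x_i, t_i) = \sup_{M \times [0,1]} |Rm_{g_i}|$$
and satisfying $Q_i - 2C_0 > i \int_0^1 \int_M |Rm_{g_i}|^{n/2+2}\, d\mu_{g_i}\, dt$. In particular $Q_i > 2C_0$, so Lemma \ref{doubling-time estimate} applied to $g_i$ forces $t_i \geq c(n)/C_0 > 0$. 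The case when $Q_i$ stays bounded is handled directly by a Cheeger-Gromov limit of the un-rescaled $g_i$: the integrals tend to zero, the limit is flat, yet $Q_i > 2C_0 > 0$ must pass to the limit, contradiction. Hence, up to a subsequence, $Q_i \to \infty$.

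Next I parabolically rescale: set
$$\tilde g_i(s) := Q_i\, g_i\!\left(t_i + s/Q_i\right), \qquad s \in [-t_i Q_i,\ (1-t_i)Q_i].$$
By choice of $(x_i, t_i)$, $|Rm_{\tilde g_i}| \leq 1$ throughout and $|Rm_{\tilde g_i}|(x_i, 0) = 1$. Since $t_i Q_i \to \infty$, the backward intervals exhaust $(-\infty, 0]$. The $\kappa$-non-collapsing condition is scale-invariant under parabolic rescaling, so Hamilton's compactness theorem produces, after passing to a subsequence, a smooth pointed Cheeger-Gromov limit
$$(M, \tilde g_i(s), x_i) \longrightarrow (M_\infty, \tilde g_\infty(s), x_\infty), \qquad s \in (-\infty, 0],$$
that is a complete ancient Ricci flow with $|Rm_{\tilde g_\infty}|(x_\infty, 0) = 1$; in particular $\tilde g_\infty$ is non-flat.

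The contradiction comes from the supercritical scaling of the integral. Under $g \mapsto Q g$ together with $dt \mapsto Q\, dt$ the quantity $\int \int |Rm|^{n/2+2} d\mu\, dt$ scales as $Q^{-1}$, giving
$$\int_{-t_i Q_i}^{(1-t_i) Q_i} \int_M |Rm_{\tilde g_i}|^{n/2+2}\, d\mu_{\tilde g_i}\, ds \;=\; \frac{1}{Q_i}\int_0^1 \int_M |Rm_{g_i}|^{n/2+2}\, d\mu_{g_i}\, dt \;<\; \frac{1}{i}.$$
The uniform bound $|Rm_{\tilde g_i}| \leq 1$ and smooth convergence on compact sub-cylinders allow us to pass the integral to the limit, forcing $\int_a^0 \int_K |Rm_{\tilde g_\infty}|^{n/2+2} d\mu_{\tilde g_\infty}\, ds = 0$ for every compact $K \subset M_\infty$ and $a < 0$. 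Hence $\tilde g_\infty$ is flat on $(-\infty,0]$, contradicting $|Rm_{\tilde g_\infty}|(x_\infty, 0) = 1$.

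The key technical point, and the main reason the lemma is non-trivial, is the lower bound $t_i \geq c(n)/C_0$ coming from the doubling-time estimate: without it the rescaled backward interval could be arbitrarily short, the limit might fail to be a complete Ricci flow near $s = 0$, and the argument would collapse. Everything else is routine: scale-invariance of $\kappa$-non-collapsing, standard Hamilton compactness under bounded curvature plus non-collapsing, and continuity of $L^{n/2+2}$ integrals under smooth Cheeger-Gromov convergence on compact sets. The argument being purely qualitative, the constant $C(n, \kappa, C_0)$ is produced by contradiction rather than explicitly.
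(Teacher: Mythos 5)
Your argument is correct and follows essentially the same route as the paper: contradiction, the doubling-time estimate to force $t_i \geq c(n)/C_0$, parabolic rescaling by $Q_i$, the $Q_i^{-1}$ scaling of the supercritical integral, and a flat-versus-nonflat contradiction in the Cheeger--Gromov limit. The only cosmetic difference is your case split on whether $Q_i$ stays bounded; the paper avoids this by observing that $t_iQ_i \geq 2c(n)$ already provides a fixed common backward existence interval, so no ancient limit is needed.
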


\begin{proof}
 If not, we can find a sequence $g_i(t), i=1,2,...$ in $\mathcal{M}$, such that
 \begin{equation*}
  \sup_{M\times[0,1]}|Rm_i| \geq P_i \int_0^1\int_M |Rm_i|^{n/2+2}d\mu_i dt + 2C_0,
 \end{equation*}
 where $P_i \to +\infty$ as $i \to \infty$. Let $Q_i=\sup_{M\times[0,1]}|Rm_i|$ for each $i$, then we can find $(x_i,t_i)$ such that $Q_i=|Rm_i(x_i,t_i)|$. Note that $Q_i > 2C_0$, Lemma \ref{doubling-time estimate} implies that $t_i \geq c(n)/C_0$, hence $Q_it_i\geq 2c(n)$. Dilate this sequence
 \begin{equation*}
  \tilde{g}_i(t)=Q_i g_i(t_i+t/Q_i), -t_iQ_i\leq t\leq(1-t_i)Q_i, i=1,2,...
 \end{equation*}
 The dilated solutions $(M, \tilde{g_i}(t), x_i)$ has a common existence interval $[-2C_0, 0]$, a uniform bound on the curvature and an injectivity radius lower bound by the assumption on the non-collapsing constant $\kappa$. Hence they converge in the pointed Cheeger-Gromov sense to a limit solution $(M_\infty, \tilde{g}(t), x_\infty), t\in [-2C_0,0]$, with $|\widetilde{Rm}|(x_\infty,0)=1$.
 But we can compute on any $x_\infty \in \Omega\subset M_\infty$
 \begin{eqnarray*}
  && \int_{-2C_0}^0\int_\Omega |\widetilde{Rm}|^{n/2+2} d\tilde{\mu} dt \\
  &=& \lim_{i\to \infty}1/Q_i \int_{t_i-2c(n)/Q_i}^{t_i} \int_{\phi_i(\Omega)}|Rm_i|^{n/2+2}d\mu_i dt \\
  &\leq& \lim_{i \to \infty} \left( \frac{1}{Q_i} \frac{Q_i-2C_0}{P_i} \right) \\
  &=& 0.
 \end{eqnarray*}
 Which implies that $|\widetilde{Rm}|(x_\infty,0)=0$, contradiction!
\end{proof}

\begin{lemma}[Mean Value Inequality]\label{prop}
 For a Ricci flow solution $(M,g(t)), t\in[0,T), T< \infty$, there exists constants $C_0(n,\kappa, \sup_M|Rm(0)|)$ and\\ $C_1=T\max\{2\sup_M|Rm(0)|,2\sup_M|Rm(0)|^2/c(n)\}$, where $c(n)$ is the constant in the `doubling-time estimate', such that for any $t\in[0,T)$
 \begin{equation*}
  \sup_{M\times[0,t]}|Rm| \leq C_0 \int_0^t\int_M |Rm(x,s)|^{n/2+2}d\mu ds + C_1.
 \end{equation*}
\end{lemma}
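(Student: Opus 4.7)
The plan is to derive Lemma \ref{prop} from its normalized counterpart Lemma \ref{normalizedmvi} by parabolic rescaling, with the doubling-time estimate Lemma \ref{doubling-time estimate} covering the short-time regime where naive rescaling produces a $1/t$ singularity. Write $K_0 := \sup_M|Rm(0)|$ and fix $t \in [0, T)$. Consider the parabolic rescaling $\tilde g(s) := t^{-1} g(ts)$ for $s \in [0, 1]$. Then $|\widetilde{Rm}|(x,s) = t\,|Rm|(x, ts)$, $d\tilde\mu = t^{-n/2}\, d\mu$, and $ds = t^{-1}\, dr$, so the rescaled initial curvature is $\sup_M|\widetilde{Rm}(0)| = tK_0 \leq TK_0$. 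Non-collapsing is scale-invariant, so $\tilde g$ is $\kappa$-non-collapsed.

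Apply Lemma \ref{normalizedmvi} to $\tilde g$ with initial curvature bound $TK_0$, obtaining
\begin{equation*}
\sup_{M\times[0,1]}|\widetilde{Rm}| \leq C(n,\kappa,TK_0) \int_0^1\int_M|\widetilde{Rm}|^{n/2+2}\, d\tilde\mu\, ds + 2TK_0,
\end{equation*}
and unwind the scaling. The supremum on the left equals $t\,\sup_{M\times[0,t]}|Rm|$ and a direct computation gives $\int_0^1\!\int_M|\widetilde{Rm}|^{n/2+2}\,d\tilde\mu\,ds = t\int_0^t\!\int_M|Rm|^{n/2+2}\, d\mu\, dr$. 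Dividing by $t$ yields
\begin{equation*}
\sup_{M\times[0,t]}|Rm| \leq C(n,\kappa,TK_0)\int_0^t\int_M|Rm|^{n/2+2}\, d\mu\, dr + \frac{2TK_0}{t}.
\end{equation*}

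To replace the $t$-dependent additive term $2TK_0/t$ by the uniform constant $C_1 = T\max\{2K_0, 2K_0^2/c(n)\}$, split into cases. For $t \geq c(n)/K_0$ one has $2TK_0/t \leq 2TK_0^2/c(n) \leq C_1$ directly. For $0 < t < c(n)/K_0$, the doubling-time estimate gives $\sup_{M\times[0,t]}|Rm| \leq 2K_0$, which is absorbed into $C_1$ since the integral term is nonnegative. Setting $C_0 := C(n,\kappa,TK_0)$, which depends only on $(n,\kappa,K_0)$ once $T$ is fixed, completes the proof. The main obstacle is the scaling bookkeeping combined with the short-time case analysis: rescaling the whole interval $[0,t]$ to $[0,1]$ is natural for applying Lemma \ref{normalizedmvi}, but it introduces a singular $1/t$ in the additive constant, and the doubling-time estimate is precisely the tool needed to remove that singularity and produce a single $t$-uniform $C_1$.
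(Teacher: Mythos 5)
Your proof is correct and follows essentially the same route as the paper's: rescale $g(ts)/t$ onto $[0,1]$, apply Lemma \ref{normalizedmvi}, unwind the scaling, and use the doubling-time estimate both to dispose of small $t$ directly and to bound the resulting $1/t$ factor when $t \geq c(n)/\sup_M|Rm(0)|$. The only cosmetic difference is that the paper first normalizes $T=1$ whereas you carry $T$ through explicitly (noting the dependence of $C_0$ on $T\sup_M|Rm(0)|$), which amounts to the same argument.
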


\begin{proof}
 We only need to prove the lemma for non-trivial solutions. Without loss of generality let $T=1$.

 For $t\in[0,c(n)/\sup_M|Rm(0)|)$ it's clearly true by Lemma \ref{doubling-time estimate}.

 For any $t\in[c(n)/\sup_M|Rm(0)|,1)$, define
 \begin{equation*}
 \tilde{g}(s)=\frac{1}{t} g(ts),\quad s\in[0,1].
 \end{equation*}
  Then
  \begin{equation*}
  |\widetilde{Rm}(0)|\leq t|Rm(0)|\leq |Rm(0)|.
  \end{equation*}
  Note that the non-collapsing constant $\kappa$ is scaling invariant. Lemma \ref{normalizedmvi} implies
 \begin{equation*}
  \sup_{M\times[0,t]}|\widetilde{Rm}| \leq C_0 \int_0^t\int_M |\widetilde{Rm}(x,s)|^{n/2+2}d\tilde{\mu} ds + 2\sup_M|Rm(0)|.
 \end{equation*}
 Then we scale it back to the original metric $g(t)$. Since the scaling factor $t$ is now bounded below by $c(n)/\sup_M|Rm(0)|$, we get
 \begin{equation*}
   \sup_{M\times[0,t]}|Rm| \leq C_0 \int_0^t\int_M |Rm(x,s)|^{n/2+2}d\mu ds + 2\sup_M|Rm(0)|^2/c(n).
 \end{equation*}
\end{proof}

Now we can use the same method as in \cite{Le} to prove Theorem \ref{subcritical}.
\begin{proof}[Proof of Theorem \ref{subcritical}]
 Let
  \begin{equation*}
   f(t)=\sup_M|Rm(t)|,
  \end{equation*}
  \begin{equation*}
   G(t)=\int_M \frac{|Rm|^{n/2+1}}{\log(1+|Rm|)}d\mu(t),
  \end{equation*}
  and
  \begin{equation*}
   \psi(s)=s\log(1+s).
  \end{equation*}
  Then $\psi$ is an increasing function when $s\geq 0$. By Lemma \ref{prop}, for any $t\in [0,T)$
 \begin{eqnarray*}
  f(t) &\leq& C\int_0^t\int_M \psi(|Rm|)\frac{|Rm|^{n/2+1}}{\log(1+|Rm|)} d\mu d s + C_1 \\
       &\leq& C\int_0^t\psi(f(s)) G(s) d s +C_1 \\
       &=:& h(t).
 \end{eqnarray*}
 $h^\prime(t)= C \psi(f(t))G(t) \leq C \psi(h(t))G(t)$ since $\psi$ is nondecreasing. Then we have
 \begin{eqnarray*}
  \int_{h(0)}^{h(t)}\frac{1}{\psi(s)} d s &=& \int_0^t C G(t) dt \\
                                          &\leq& C\int_0^t\int_{M} \frac{|Rm|^{n/2+1}}{\log(1+|Rm|)} d\mu dt \\
                                          &<& \infty.
 \end{eqnarray*}

 Since $\int_1^\infty \frac{1}{\psi(s)} ds = \infty$, we deduce that $\sup_{[0,T)}h(t)< \infty$, hence $\sup_{[0,T)}f(t)< \infty$. Therefore the flow can be extended by Theorem 14.1 in \cite{HR1}.
\end{proof}

\textbf{Acknowledgements:} The author would like to thank his advisor Peter Li for his advising, encouragement and generous support. Also thank Jeffrey Streets for many useful suggestions and all his help in preparing this paper, and thank Guoyi Xu for helpful discussions.


\bibliographystyle{amsplain}

\end{document}